\documentclass[12pt]{article}
\textheight 220 true mm
\textwidth 160 true mm
\topmargin -5mm
\oddsidemargin 0mm
\evensidemargin 0mm

\setcounter{totalnumber}{50}
\setcounter{topnumber}{50}
\setcounter{bottomnumber}{50}

\usepackage[dvips]{graphicx}
\usepackage{amsmath,amsfonts,amssymb,amsthm,color}

\usepackage{natbib}

\newtheorem{theorem}{Theorem}[section]

\newtheorem{proposition}{Proposition}[section]

\newtheorem{definition}{Definition}[section]
\newtheorem{example}{Example}[section]

\newtheorem{condition}{Condition}[section]

\usepackage{bm}
\bmdefine{\Bt}{t}
\bmdefine{\BX}{X}
\bmdefine{\BY}{Y}
\bmdefine{\BZ}{Z}
\bmdefine{\BB}{B}
\bmdefine{\BM}{M}
\bmdefine{\BD}{D}
\bmdefine{\Bi}{i}
\bmdefine{\Bj}{j}
\bmdefine{\Bx}{x}
\bmdefine{\By}{y}
\bmdefine{\Bz}{z}
\bmdefine{\Bv}{v}
\bmdefine{\Bw}{w}
\bmdefine{\Bn}{n}
\bmdefine{\Ba}{a}
\bmdefine{\Bb}{b}
\bmdefine{\Bc}{c}
\bmdefine{\Be}{e}
\bmdefine{\Bu}{u}
\bmdefine{\Bp}{p}
\bmdefine{\Bzero}{0}
\bmdefine{\Bone}{1}

\newcommand{\cB}{{\mathcal B}}
\newcommand{\cF}{{\mathcal F}}
\newcommand{\cI}{{\mathcal I}}

\newcommand{\supp}{\mathop{\mathrm{supp}}}

\numberwithin{equation}{section}

\title{Connecting tables with zero-one entries by a subset of a Markov basis}

\author{
Hisayuki Hara\footnote{
Department of Technology Management for Innovation, 
University of Tokyo} \ and
Akimichi Takemura\footnote{
Graduate School of Information Science and Technology, 
University of Tokyo}
\footnote{JST, CREST} }

\date{November 2009}

\begin{document}
\maketitle

\begin{abstract}
We discuss connecting tables with zero-one entries 
by a subset of a Markov basis.  In this paper, as a Markov basis 
we consider the Graver basis, which corresponds to the unique minimal Markov 
basis for the Lawrence lifting of the original configuration. 
Since the Graver basis tends to be large, it is of interest to
clarify conditions such that a subset of the Graver basis, in particular
a minimal Markov basis itself, connects tables with zero-one entries.
We give some theoretical results on the connectivity of tables 
with zero-one entries. We also study some common models, where a minimal 
Markov basis for tables without the zero-one restriction does 
not connect tables with zero-one entries.
\end{abstract}

\noindent
{\it Key words:}  Graver basis, Latin squares, logistic
regression, Rasch model 

\section{Introduction}
\label{sec:intro}
Markov bases methodology initiated by \cite{diaconis-sturmfels} for
performing conditional tests of discrete exponential family models
have been extensively studied in recent years.  
The set of contingency tables sharing values of the sufficient
statistic is called a fiber. 
A Markov basis guarantees connectivity of all fibers by definition. 
Since the size of a Markov basis tends to be large for large-scale
problems, 
researchers are interested in a subset of Markov basis 
to connect specific fibers. 
In most applications of Markov basis there are no restrictions
on the cell counts.  However in some problems, the counts are either
zero or one.  The most well-known case is the Rasch model
(\cite{Rasch1980}) used in educational statistics.  

The Rasch model can be interpreted as a logistic regression (logit
model),  where the number
of trials is just one for each combination of covariates.  In this
model, tables with zero-one entries (zero-one tables) 
are elements of a specific fiber, where the
marginal frequencies corresponding to the response variable are all equal
to one in the logistic regression.  

In other cases, zero-one tables appear as truncation or dichotomization
of a variable, where for example only an occurrence or non-occurrence of
certain large event is recorded. 
A convenient statistical model for zero-one tables is a log-linear model
for contingency tables, where the support of the distribution is restricted
to zero-one tables.  Then we can use the Markov basis methodology for
conditional tests of the fit of the model.

Two-way zero-one tables with structural zeros arise in many practical
problems in ecological studies and social networks 
and exact tests of quasi-independence models via a Markov basis has been 
studied for some specific problems 
(e.g. \cite{rao-etal-1996sankhya,roberts-2000SocialNetworks}). 

Another source of zero-one tables is the set of
incidence matrices satisfying certain combinatorial restrictions.
For example, the set of Latin squares and Sudoku tables can be
considered as a set of zero-one tables with fixed marginals.
From combinatorial viewpoint it is of interest to construct a
connected Markov chain over the set of these tables.

Note that a minimal Markov basis without the zero-one restriction may
not connect zero-one tables, because by applying a
move from the Markov basis, some cells may contain frequencies greater
than one.  On the other hand, as clarified in Proposition
\ref{prop:graver} in Section \ref{sec:main}, the set of square-free
moves of the Graver basis   
connects tables with zero-one entries.  Therefore
it is of interest to study when a minimal Markov basis connects zero-one 
tables, and if this is not the case, to find a subset of the Graver
basis connecting zero-one tables.  

In this paper we give some theoretical results on the connectivity of tables 
with zero-one entries.  Unfortunately we found that our sufficient
conditions for connectivity are satisfied only in a few examples.
Therefore we investigate some common models, where a minimal 
Markov basis for tables without the zero-one restriction does 
not connect tables with zero-one entries.

The organization of the paper is as follows.
For the rest of this section we summarize our notation and preliminary facts.
In  Section \ref{sec:main}  we give some theoretical results on 
connectivity of zero-one tables with a minimal Markov basis and
with some other subsets of the Graver basis.
In Section \ref{sec:models} we study connectivity of zero-one tables in 
some common models for contingency tables, including the Rasch model, 
its multivariate version and the quasi-independence model. 
We also discuss Latin squares.
We conclude the paper with some remarks
in Section \ref{sec:remarks}.


\subsection{Notation and preliminary facts}
\label{subsec:notation}

Here we set up our notation and summarize preliminary facts on Markov
and Graver bases.
We mostly follow the notation in  \cite{decomposable}.
Let ${\mathcal I}$ denote the set of cells of a table, 
where $i \in \cI$ is usually a multi-index.
Let $\Delta$ be the set of variables.
Then a cell $i$ is considered as a $\vert \Delta \vert$ dimensional
vector $i := (i_d)_{d \in \Delta}$. 
Denote by $I=\vert {\mathcal I} \vert$ the number of cells. 
For a subset $D \subset \Delta$, denote by 
$i_D := (i_d)_{d \in D}$ and ${\mathcal I}_D$ a $D$-marginal cell and
the set of $D$-marginal cells, respectively.  
Define ${\mathcal I}_D := \prod_{d \in D} {\mathcal I}_d$ and 
$I_D := \prod_{d \in D} I_d$. 

A contingency table or a frequency vector is denoted by 
$\Bx=(x(i))_{\{ i\in {\mathcal I}\}}$.
Let $x(i_D)$ denote a marginal frequency, i.e. 
$x(i_D) = \sum_{i_{D^c} \in {\mathcal I}_{D^c}} x(i_D, i_{D^c})$.
For a given $\Bx$, $\supp(\Bx)=\{ i \mid x(i)>0 \}$ denotes the set of
positive cells of $\Bx$. 
Given a loglinear model (more precisely a toric model), the
sufficient statistic $\Bt$ can be written as $\Bt = A\Bx$ for some integral matrix $A$.
We call $A$ a configuration of the model.  
$I_A$ denotes the toric ideal of $A$. 
Assume that $I_A$ is homogeneous, i.e. 
there exists a vector $\bm{w}$ such that 
\begin{equation}
 \label{homo-toric}
  \bm{w}^\prime A = (1,\ldots,1)
\end{equation}
(Lemma 4.14 in \cite{sturmfels1996}). 
The set $\cF_\Bt= \{ \Bx \ge 0 \mid \Bt=A \Bx\}$ of contingency tables 
with the common sufficient statistic $\Bt$ is called a {\it fiber}.

An integer vector $\Bz$ is called a move if $A\Bz=\Bzero$.
$|\Bz|=\sum_{i\in \cI} |z(i)|$ denotes the $L_1$-norm of $\Bz$.
Separating  positive elements and negative elements of $\Bz$, we write
$\Bz=\Bz^+ - \Bz^-$, where $\Bz^+$ is the positive part of $\Bz$ and
$\Bz^-$ is the negative part of $\Bz$.  
The total sum of frequencies in $\Bz^+$ ($\Bz^-$) is called degree of
$\Bz$.  
For two moves $\Bz_1, \Bz_2$, the sum $\Bz_1 + \Bz_2$ is called {\it conformal}
if there is no cancellation of signs in $\Bz_1 + \Bz_2$, i.e., 
$\emptyset = \supp(\Bz_1^+)\cap \supp(\Bz_2^-)= \supp(\Bz_1^-)\cap \supp(\Bz_2^+)$.
A move $\Bz$ which can not be written as a conformal sum of two (non-zero) moves
is called {\it primitive}.  The set of primitive moves is finite and it is called the
Graver basis of $I_A$.  We denote the Graver basis as $\cB_{{\rm GR}}$.

Let $E_I$ denote the $I \times I$ identity matrix. 
The configuration 
\begin{equation}
\label{eq:lawrence}
\Lambda(A) = 
\begin{pmatrix} A & 0 \\
               E_I & E_I
\end{pmatrix}
\end{equation}
is called the Lawrence lifting of $A$.  In statistical terms, the Lawrence lifting
corresponds to the logistic regression, where the interaction effects of the covariates
are specified by $A$.
It is known
(\cite[Theorem 7.1]{sturmfels1996}) that the unique minimal Markov
basis of $I_{\Lambda(A)}$ coincides with the Graver basis of $I_A$.
 
A finite set of moves $\cB$ is {\it distance reducing} (\cite{takemura-aoki-2005bernoulli})
if for all $\Bt$ and for all $\Bx,\By\in \cF_\Bt$
there exists an element $\Bz \in \cB $ and $\epsilon =\pm 1$ such that
\[
\Bx + \epsilon \Bz\in {\mathcal F}_{\Bt},\  |\Bx + \epsilon \Bz - \By| < |\Bx- \By| \quad \mbox{or} \quad
\By + \epsilon \Bz\in {\mathcal F}_{\Bt},\  |\Bx - (\By+\epsilon \Bz) | < |\Bx -\By|.
\]
If $\cB$ is distance reducing, it is obviously a Markov basis and 
we call $\cB$  a distance reducing Markov basis.
Furthermore ${\mathcal B}$ is {\it strongly distance reducing} 
if for all $\Bt$ and for all $\Bx,\By\in \cF_\Bt$
there exist elements $\Bz_1, \Bz_2 \in
{\mathcal B}$ and $\epsilon_1, \epsilon_2  =\pm 1$ such that 
$\Bx+ \epsilon_1 \Bz_1, \By+ \epsilon_2 \Bz_2 \in {\mathcal F}_{\Bt}$, 
$|\Bx + \epsilon_1 \Bz_1- \By| < |\Bx-\By|$
and
$|\Bx -(\By+\epsilon_2 \Bz_2 )| < |\Bx-\By|$.

Since we are considering zero-one tables in this paper, let us denote
\begin{equation}
\label{eq:zero-one-for-A}
\tilde  \cF_{\Bt}= \{ \Bx \mid \Bt=A \Bx,\ x(i)=0 \; \text{or}\; 1 \}.
\end{equation}
As in the usual setting for Markov bases, we call a finite set $\cB$
of moves a Markov basis for zero-one tables, if $\cB$ connects all
fibers $\tilde  \cF_\Bt$.  If $\cB$ is distance reducing for
zero-one tables, then it is a distance reducing Markov basis for zero-one tables.
Since there are $2^{|\cI|}$ zero-one tables,
there are only finitely many fibers and finitely many 
differences of two elements belonging to the same fiber.  Therefore
the set of these differences is the largest trivial Markov basis. However
this set is clearly too large and we are interested in a much smaller
set of moves connecting all fibers $\tilde  \cF_\Bt$.

\section{Some theoretical results}
\label{sec:main}


The starting point of our investigation of 
connectivity of zero-one tables is the following basic fact on
the Graver basis ${\mathcal
  B}_{\rm GR}$ for $I_A$.

\begin{proposition}
 \label{prop:graver}
 Let $\cB_0$ denote the set of square-free moves of the Graver
 basis ${\mathcal   B}_{\rm GR}$ of $I_A$. Then 
 $\cB_0$ is 
 strongly distance reducing
 for tables with zero-one entries.
\end{proposition}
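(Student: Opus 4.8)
The plan is to exploit the fundamental conformal decomposition property of the Graver basis, together with the elementary observation that the difference of two zero-one tables is automatically square-free. Given $\Bt$ and $\Bx,\By\in\tilde\cF_\Bt$, I would set $\Bw=\Bx-\By$. Since $A\Bx=A\By=\Bt$, this $\Bw$ is a move, and since every entry of $\Bx$ and $\By$ lies in $\{0,1\}$, every entry of $\Bw$ lies in $\{-1,0,1\}$; that is, $\Bw$ is itself square-free. If $\Bw=\Bzero$ then $\Bx=\By$ and there is nothing to prove, so assume $\Bw\neq\Bzero$.

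Next I would invoke the defining property of the Graver basis: every move admits a conformal decomposition $\Bw=\sum_k \Bv_k$ into (not necessarily distinct) elements $\Bv_k\in\cB_{\rm GR}$. This follows by induction on $|\Bw|$ directly from the definition of primitivity, since a non-primitive move splits conformally into two strictly shorter nonzero moves. The key step is then to argue that each $\Bv_k$ is in fact square-free, so that $\Bv_k\in\cB_0$. Fix a cell $i$. If $w(i)=1$, conformality forces $v_k(i)\ge 0$ for all $k$ together with $\sum_k v_k(i)=1$, so exactly one $v_k(i)$ equals $1$ and the rest vanish; the case $w(i)=-1$ is symmetric; and if $w(i)=0$, conformality forbids any sign cancellation, so every $v_k(i)=0$. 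Hence $v_k(i)\in\{-1,0,1\}$ at every cell, and each $\Bv_k\in\cB_0$.

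It then remains to verify the two distance-reduction inequalities. I would pick any single element of the decomposition, say $\Bv:=\Bv_1$. By conformality $\supp(\Bv^+)\subseteq\supp(\Bw^+)$ and $\supp(\Bv^-)\subseteq\supp(\Bw^-)$, so on $\supp(\Bv^+)$ we have $x(i)=1,\ y(i)=0$ and on $\supp(\Bv^-)$ we have $x(i)=0,\ y(i)=1$. A direct cell-by-cell check then shows that $\Bx-\Bv$ and $\By+\Bv$ both have all entries in $\{0,1\}$, and since $A\Bv=\Bzero$ both remain in $\tilde\cF_\Bt$. Finally, conformality yields $|\Bw|=\sum_k|\Bv_k|$, whence
\[
|(\Bx-\Bv)-\By|=|\Bw-\Bv|=|\Bw|-|\Bv|<|\Bw|=|\Bx-\By|,
\]
and likewise $|\Bx-(\By+\Bv)|<|\Bx-\By|$. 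Taking $\Bz_1=\Bz_2=\Bv$, $\epsilon_1=-1$ and $\epsilon_2=+1$ then fulfills both requirements in the definition of strongly distance reducing.

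I expect the only genuine subtlety to be the middle step, namely that conformality of the decomposition, combined with $\Bw$ being square-free, forces each Graver element $\Bv_k$ to inherit square-freeness; once this is established, the support and $L_1$-norm bookkeeping that yields membership in $\tilde\cF_\Bt$ and the strict distance reduction is entirely routine.
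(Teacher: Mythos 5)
Your proof is correct and follows essentially the same route as the paper: decompose the difference of the two zero-one tables as a conformal sum of primitive moves, observe that conformality forces each summand to be square-free (hence in $\cB_0$), and use conformality again to apply a single summand from either end while strictly reducing the $L_1$-distance. Your cell-by-cell verification of square-freeness and of membership in $\tilde\cF_\Bt$ is somewhat more explicit than the paper's argument via intermediate partial sums, but the underlying idea is identical.
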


\begin{proof}
Let $\Bx$, $\By$ be two zero-one tables of the same fiber.
They  are connected by a conformal sum of primitive moves 
\begin{equation}
\label{eq:conformal-sum}
\By=\Bx + \Bz_1 + \dots + \Bz_K.
\end{equation}
Since there is no cancellation of signs on the right-hand side,
once an entry greater than or equal to 2 appears in an intermediate sum of
the right-hand side, it can not be canceled.  Therefore it follows 
that  
$\Bx + \Bz_1 + \dots + \Bz_k \in \tilde{F}_\Bt$ for $k = 1,\ldots,K$ and 
$\Bz_1,\dots, \Bz_K \in \cB_0$.  
Since there are no sign cancellations in (\ref{eq:conformal-sum}), 
$\Bz_1,\dots, \Bz_K$ can be added to $\Bx$ in any order and
$-\Bz_1,\dots, -\Bz_K$ can be added  to $\By$ in any order.
Therefore $\cB_0$ is strongly distance reducing.
\end{proof}

Since the Graver basis tends to be large, we are interested
in conditions for connecting tables with zero-one entries with
a subset of the Graver basis.
We consider the following condition.

\begin{condition}[Existence of strong crossing pattern] 
 \label{cond:1} 
Let ${\Be_i}$ denote the frequency vector with just $1$ frequency in the
$i$-th cell and $0$ otherwise.  
For every fiber and every $\Bx, \By$, $\Bx\neq \By$, in the same fiber, 
there exist distinct cells $i_1, i_2, i_3, i_4$ such that
$x(i_1)> y(i_1), x(i_2)> y(i_2), x(i_3) < y(i_3), x(i_4) \le y(i_4)$
or 
$y(i_1)> x(i_1), y(i_2)> x(i_2), y(i_3) < x(i_3), y(i_4) \le x(i_4)$
and 
\begin{equation}
\label{eq:goodmoves}
\Bz= \Be_{i_3} + \Be_{i_4}-\Be_{i_1}-\Be_{i_2}
\end{equation}
is a move.
\end{condition}

Note the set $\cB$ 
of the moves $\Bz$ in (\ref{eq:goodmoves})
forms a distance reducing Markov basis.  Therefore Condition 
\ref{cond:1} is a
sufficient condition for existence of a distance reducing Markov basis
consisting of square-free  moves of degree two.  
However existence of such a Markov basis 
does not imply Condition \ref{cond:1}.  
We discuss this point at the end of this section.
%
Under Condition \ref{cond:1} we have the following result.

\begin{theorem}
\label{thm:1}
Under Condition \ref{cond:1}, the set $\cB$ of moves
(\ref{eq:goodmoves})
is distance reducing for tables with zero-one entries.
\end{theorem}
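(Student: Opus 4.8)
The plan is to verify the distance-reducing property directly for an arbitrary pair of distinct zero-one tables $\Bx, \By \in \tilde\cF_\Bt$, using the four cells $i_1, i_2, i_3, i_4$ and the move $\Bz = \Be_{i_3} + \Be_{i_4} - \Be_{i_1} - \Be_{i_2}$ supplied by Condition \ref{cond:1}. By the symmetry built into the condition (the ``or'' that exchanges the roles of $\Bx$ and $\By$), and since the definition of distance reducing lets us move either table, it suffices to treat the first alternative; so I assume $x(i_1) > y(i_1)$, $x(i_2) > y(i_2)$, $x(i_3) < y(i_3)$, and $x(i_4) \le y(i_4)$. Because all entries lie in $\{0,1\}$, these inequalities pin down the entries almost completely: $x(i_1) = x(i_2) = 1$ with $y(i_1) = y(i_2) = 0$, and $x(i_3) = 0$ with $y(i_3) = 1$, while the cell $i_4$ remains only partially constrained.

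The key observation is that the entry at $i_4$ is the sole source of difficulty, and its value dictates which table we move. First I would list the three zero-one patterns at $i_4$ consistent with $x(i_4) \le y(i_4)$, namely $(x(i_4), y(i_4)) \in \{(0,0), (0,1), (1,1)\}$. If $x(i_4) = 0$ (the first two patterns), I add $\Bz$ to $\Bx$: the table $\Bx + \Bz$ takes entries $1, 1, 0, 0$ at $i_3, i_4, i_1, i_2$ and is unchanged elsewhere, hence remains a zero-one table in $\tilde\cF_\Bt$ since $A\Bz = \Bzero$. If instead $x(i_4) = 1$ (which forces $y(i_4) = 1$), then adding $\Bz$ to $\Bx$ would create a forbidden entry $2$ at $i_4$; so in this case I subtract $\Bz$ from $\By$, and $\By - \Bz$ takes entries $1, 1, 0, 0$ at $i_1, i_2, i_3, i_4$ and again stays in $\tilde\cF_\Bt$.

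It then remains to confirm that the chosen move strictly reduces the $L_1$-distance to the other table. Since only the four cells $i_1, i_2, i_3, i_4$ are altered, this is a short cell-by-cell count: when $x(i_4) = 0$, the three cells $i_1, i_2, i_3$ each pass from disagreement to agreement with $\By$ (a decrease of $3$), while $i_4$ changes by at most $1$, for a net change of at most $-2$; when $x(i_4) = 1$, the identical count applies to $\By - \Bz$ relative to $\Bx$, with the contributing cells mirrored. In every case the distance drops by at least $2$, which establishes the strict inequality in one of the two alternatives of the definition of distance reducing.

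The main obstacle to anticipate is precisely the weak inequality $x(i_4) \le y(i_4)$ rather than a strict one: it permits the degenerate pattern $x(i_4) = y(i_4) = 1$, in which the natural move applied to $\Bx$ leaves the zero-one world. Handling this by acting on $\By$ instead of $\Bx$ is the crux of the argument, and it is exactly what the two-sided formulation of distance reducing is designed to accommodate. No appeal to the full Graver basis or to Proposition \ref{prop:graver} is needed; once the case split on $i_4$ is in place, the entire verification is elementary.
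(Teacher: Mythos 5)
Your proof is correct and follows essentially the same route as the paper's: pin down the entries at $i_1,i_2,i_3$ using the zero-one constraint, split on the value of $x(i_4)$, apply $\Bz$ to $\Bx$ when $x(i_4)=0$ and to $\By$ when $x(i_4)=y(i_4)=1$, and count the change in $L_1$-distance cell by cell. Your accounting in the first case (a net decrease of at least $2$, since $i_4$ may pass from agreement to disagreement when $y(i_4)=0$) is in fact slightly more careful than the paper's statement that the distance drops by four there.
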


\begin{proof}
Let $\Bx$, $\By$ be two zero-one tables in the same fiber.
By Condition \ref{cond:1}, we can find distinct cells $i_1$, $i_2$,
 $i_3$, $i_4$ such that 
\begin{align*}
& x(i_1) \ge y(i_1)+1, \ x(i_2)\ge y(i_2)+1  \\
& \qquad \Rightarrow \quad
x(i_1) = 1, \ x(i_2)= 1, \ y(i_1)=0, y(i_2)=0
\end{align*}
and 
$0 \le x(i_3)< y(i_3)\le 1 \Rightarrow x(i_3)=0, y(i_3)=1$.  
If $x(i_4)=0$ then we can add
$\Bz= \Be_{i_3} + \Be_{i_4}-\Be_{i_1}-\Be_{i_2}$ to $\Bx$
and reduce the $L_1$-distance by four.  Furthermore 
$\Bx+\Bz$ is a table of zeros and ones.

It remains to consider the case $x(i_4)=1$.  Since 
$x(i_4) \le y(i_4)$, we have $y(i_4)=1$.  Therefore
$y(i_1)=0, y(i_2)=0, y(i_3)=1, y(i_4)=1$.  Then we can subtract
$\Bz$ from $\By$ and $\By-\Bz$ is a table of zeros and ones.
Furthermore
$|\Bx-(\By-\Bz)|=|\Bx-\By|-2$.  Therefore under Condition \ref{cond:1} we can
reduce the distance always by at least $2$.  Therefore $\cB$ is 
distance reducing for fibers of zero-one tables.
\end{proof}

Theorem \ref{thm:1} is simple and effective to prove that a
particular Markov basis connects zero-one tables for some simple
configurations.  We now present several generalizations of
Theorem \ref{thm:1}.
The following proposition is an obvious extension of Theorem
\ref{thm:1} and we omit a proof.

\begin{proposition}
 \label{prop:gen1}
 Assume that there exists a positive integer $M$, such that
 for every fiber and every $\Bx, \By$, $\Bx\neq \By$, in the same fiber 
 there exists a positive integer $m\le M$ and 
 distinct cells $i_1,\dots, i_{2m}$ such  that
 \begin{equation}
  \label{eq:goodm}
   \Bz= \sum_{j=m+1}^{2m} \Be_{i_j} - \sum_{j=1}^m \Be_{i_j}
 \end{equation}
 is a move
 such that at least one of the following conditions hold:
 i)
 $x(i_j)> y(i_j)$, $j=1,\dots, m$, $x(i_j) < y(i_j)$,
 $j=m+1,\dots, 2m-1$,  $x(i_{2m})\le  y(i_{2m})$, or
 ii)
 $y(i_j)> x(i_j)$, $j=1,\dots, m$, $y(i_j) < x(i_j)$,
 $j=m+1,\dots, 2m-1$,  $y(i_{2m})\le  x(i_{2m})$.
 Then the set $\cB$ of moves $\Bz$ in 
 (\ref{eq:goodm})
 is distance reducing for tables with zero-one entries.
\end{proposition}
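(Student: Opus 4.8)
The plan is to mirror the argument for Theorem~\ref{thm:1} almost verbatim, replacing the two ``source'' cells $i_1,i_2$ and two ``sink'' cells $i_3,i_4$ by the $m$ cells $i_1,\dots,i_m$ carrying the negative part of $\Bz$ and the $m$ cells $i_{m+1},\dots,i_{2m}$ carrying its positive part. Fix $\Bx,\By$ in a common fiber $\tilde\cF_\Bt$ with $\Bx\neq\By$, and by the symmetry between conditions i) and ii) assume i) holds (otherwise interchange the roles of $\Bx$ and $\By$). First I would use the zero-one restriction to pin down the entries exactly: $x(i_j)>y(i_j)$ with $x(i_j),y(i_j)\in\{0,1\}$ forces $x(i_j)=1,\ y(i_j)=0$ for $j=1,\dots,m$, and $x(i_j)<y(i_j)$ forces $x(i_j)=0,\ y(i_j)=1$ for $j=m+1,\dots,2m-1$. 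The only cell whose value is not yet determined is the ``slack'' cell $i_{2m}$, for which we only know $x(i_{2m})\le y(i_{2m})$.

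Next I would split into the two cases $x(i_{2m})=0$ and $x(i_{2m})=1$, exactly as in Theorem~\ref{thm:1}. If $x(i_{2m})=0$, then $\Bz$ subtracts $1$ from each cell where $\Bx$ equals $1$ (the cells $i_1,\dots,i_m$) and adds $1$ to each cell where $\Bx$ equals $0$ (the cells $i_{m+1},\dots,i_{2m}$), so every entry of $\Bx+\Bz$ stays in $\{0,1\}$ and $\Bx+\Bz\in\tilde\cF_\Bt$. If instead $x(i_{2m})=1$, then $x(i_{2m})\le y(i_{2m})$ forces $y(i_{2m})=1$, hence $\By$ equals $0$ on $i_1,\dots,i_m$ and $1$ on $i_{m+1},\dots,i_{2m}$; subtracting $\Bz$ from $\By$ again lands in $\{0,1\}$, so $\By-\Bz\in\tilde\cF_\Bt$. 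In both cases the essential point is that the $+1$ and $-1$ entries of $\Bz$ sit precisely on cells where the relevant table is $0$ and $1$ respectively, which is exactly what keeps the intermediate table a zero-one table.

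It then remains to check the distance bookkeeping. Among the $2m$ cells, the $2m-1$ cells $i_1,\dots,i_{2m-1}$ each have their contribution to $|\Bx-\By|$ driven from $1$ to $0$ by the move, while the slack cell $i_{2m}$ changes the distance by at most $1$ in either direction; hence $|\Bx+\Bz-\By|$ (or $|\Bx-(\By-\Bz)|$) is smaller than $|\Bx-\By|$ by at least $(2m-1)-1=2m-2$. This is the only step requiring real care, and it is where I expect the sole subtlety to lie: the guarantee is vacuous when $m=1$, so one must either invoke $m\ge 2$ (a degree-one move would force two equal columns of $A$ and is excluded) or simply note, as in Theorem~\ref{thm:1}, that for $m\ge 2$ the reduction is at least two. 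Finally, the hypothesis $m\le M$ plays exactly one role, namely it bounds the degree of the admissible moves, so that $\cB$ is a finite set (there are only finitely many square-free moves of degree at most $M$ on the fixed cell set $\cI$) and hence a genuine Markov basis rather than an infinite family. Assembling these observations shows that from every $\Bx\neq\By$ in a common fiber some element of $\cB$ strictly reduces the $L_1$-distance while preserving the zero-one property, i.e.\ $\cB$ is distance reducing for tables with zero-one entries.
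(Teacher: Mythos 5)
Your proof is correct and follows exactly the route the paper intends: the paper omits the proof, calling the proposition ``an obvious extension of Theorem \ref{thm:1}'', and your argument is precisely that extension, with the right case split on the slack cell $i_{2m}$ (add $\Bz$ to $\Bx$ if $x(i_{2m})=0$, subtract it from $\By$ if $x(i_{2m})=1$) and the right distance count of at least $(2m-1)-1=2m-2$. Your remark that the bound is vacuous for $m=1$ (a degree-one move, i.e.\ two equal columns of $A$) is a genuine minor gap in the statement that the paper glosses over, and flagging it is appropriate.
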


Proposition \ref{prop:gen1} suggests a possibility to choose a
subset of $\cB_0$ of Proposition \ref{prop:graver}, which still
guarantees the connectivity of tables with zero-one entries.
Let $\cB$ be a subset of $\cB_0$ with the following property.

\begin{condition}[Generalized strong crossing pattern for the Graver basis]
\label{cond:1a}
For every element $\Bz=\Bz^+ - \Bz^- \in \cB_0\setminus \cB$, 
there exists a move 
$\Bz'= \sum_{j=m+1}^{2m} \Be_{i_j} - \sum_{j=1}^m \Be_{i_j} \in \cB$
such that  $i_1,\dots, i_{2m}$ are distinct 
and at least one of the
following conditions hold:
i)
$\Bz^+(i_j)> \Bz^-(i_j)$, $j=1,\dots, m$, $\Bz^+(i_j) < \Bz^-(i_j)$,
$j=m+1,\dots, 2m-1$,  $\Bz^+(i_{2m})\le  \Bz^-(i_{2m})$, or
ii) 
$\Bz^-(i_j)> \Bz^+(i_j)$, $j=1,\dots, m$, $\Bz^-(i_j) < \Bz^+(i_j)$,
$j=m+1,\dots, 2m-1$,  $\Bz^-(i_{2m})\le  \Bz^+(i_{2m})$.
\end{condition}

Combining Proposition \ref{prop:graver} and Proposition 
\ref{prop:gen1} we have the following proposition.

\begin{proposition}
\label{prop:gen2}
If $\cB$ satisfies Condition \ref{cond:1a}, then $\cB$ is
distance reducing for tables with zero-one entries.
\end{proposition}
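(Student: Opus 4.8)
The plan is to combine the conformal decomposition underlying Proposition \ref{prop:graver} with the move-replacement furnished by Condition \ref{cond:1a}, and then to invoke the distance reduction of Proposition \ref{prop:gen1}. First I would fix two zero-one tables $\Bx,\By$ in a common fiber with $\Bx\neq\By$ and, exactly as in the proof of Proposition \ref{prop:graver}, write $\By-\Bx=\Bz_1+\dots+\Bz_K$ as a conformal sum of square-free primitive moves $\Bz_k\in\cB_0$. The key structural fact I would extract is that, because the sum is conformal and $\Bx,\By$ are zero-one, every summand $\Bz=\Bz_k$ satisfies $x(i)=0,\ y(i)=1$ on $\supp(\Bz^+)$ and $x(i)=1,\ y(i)=0$ on $\supp(\Bz^-)$. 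If some summand already lies in $\cB$, applying it to $\Bx$ matches $\By$ on its support, stays within zero-one tables, and strictly reduces $|\Bx-\By|$, so we are done.

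The substantive case is when a chosen summand $\Bz$ lies in $\cB_0\setminus\cB$. Here I would apply Condition \ref{cond:1a} to produce $\Bz'=\sum_{j=m+1}^{2m}\Be_{i_j}-\sum_{j=1}^m\Be_{i_j}\in\cB$ and then translate its defining entrywise inequalities on $\Bz$ into crossing inequalities on the pair $(\Bx,\By)$ via the previous paragraph. Concretely, in case (i) the relation $\Bz^+(i_j)>\Bz^-(i_j)$ for $j\le m$ places $i_j\in\supp(\Bz^+)$, hence $x(i_j)<y(i_j)$, while $\Bz^+(i_j)<\Bz^-(i_j)$ for $m+1\le j\le 2m-1$ places $i_j\in\supp(\Bz^-)$, hence $x(i_j)>y(i_j)$. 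This is exactly the crossing pattern required by hypothesis (ii) of Proposition \ref{prop:gen1} for the move $\Bz'$ at all cells except the last, and symmetrically case (ii) of Condition \ref{cond:1a} yields hypothesis (i). Once the hypothesis of Proposition \ref{prop:gen1} is verified for $\Bz'$ with respect to $(\Bx,\By)$, its conclusion gives a distance-reducing, zero-one-preserving application of $\Bz'$, and since $\Bz'\in\cB$ this shows $\cB$ is distance reducing.

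The step I expect to be the main obstacle is the boundary cell $i_{2m}$. The translation above is exact only for $i_1,\dots,i_{2m-1}$, which the conditions force into $\supp(\Bz)$; for $i_{2m}$ Condition \ref{cond:1a} gives only $\Bz^+(i_{2m})\le\Bz^-(i_{2m})$, i.e. $\Bz^+(i_{2m})=0$ by square-freeness, which says $i_{2m}\notin\supp(\Bz^+)$ but permits $i_{2m}\notin\supp(\Bz)$ altogether. I therefore have to argue that the required boundary inequality $y(i_{2m})\le x(i_{2m})$ (in case ii) still holds, and it is precisely here that the inequality $\le$ in both Condition \ref{cond:1a} and Proposition \ref{prop:gen1} does its work: the reduction is carried out by the same two-subcase device as in Theorem \ref{thm:1}, applying $-\Bz'$ to $\Bx$ when $x(i_{2m})=1$ and $+\Bz'$ to $\By$ when $y(i_{2m})=0$, at least one of which is legitimate once the pattern $(x(i_{2m}),y(i_{2m}))=(0,1)$ is excluded. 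Verifying that this offending pattern cannot occur, so that one of the two applications is always available and reduces the distance by at least two, is the delicate point on which the argument turns, and I would treat it carefully rather than defer it to Proposition \ref{prop:gen1}.
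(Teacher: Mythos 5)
Your outline follows the paper's strategy (conformal decomposition into square-free primitive moves, replacement of a summand in $\cB_0\setminus\cB$ by the move $\Bz'$ supplied by Condition \ref{cond:1a}, and a crossing-pattern distance reduction as in Theorem \ref{thm:1}), and your bookkeeping for the cells $i_1,\dots,i_{2m-1}$ is correct. But the argument has a genuine gap at exactly the point you flag and then leave open: the boundary cell $i_{2m}$. Working directly with the pair $(\Bx,\By)$, the ``offending pattern'' $(x(i_{2m}),y(i_{2m}))=(0,1)$ \emph{can} occur. Condition \ref{cond:1a} only tells you $i_{2m}\notin\supp(\Bz_1^+)$ (in case (i)); if $i_{2m}\notin\supp(\Bz_1)$ altogether but $i_{2m}\in\supp(\Bz_k^+)$ for some other summand $\Bz_k$ of the conformal decomposition, then $x(i_{2m})=0$ and $y(i_{2m})=1$, and neither $\Bx-\Bz'$ nor $\By+\Bz'$ is a zero-one table. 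So the step you propose --- ``verifying that this offending pattern cannot occur'' --- is not available in your formulation; one would have to argue about choosing a different summand or a different $\Bz'$, and Condition \ref{cond:1a} gives no control over that.

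The paper circumvents this by changing the order of reductions. It first reduces (by the argument of Lemma 2.4 of Takemura--Aoki) to checking distance reduction for the canonical pair $(\Bz_1^-,\Bz_1^+)$ rather than for $(\Bx,\By)$. For that pair the boundary cell is harmless: $\Bz_1^+(i_{2m})\le\Bz_1^-(i_{2m})$ forces $\Bz_1^+(i_{2m})=0$, so $\Bz'$ can \emph{always} be added to $\Bz_1^+$ (every cell of $\supp(\Bz'^+)$ is vacant in $\Bz_1^+$ and every cell of $\supp(\Bz'^-)$ carries a $1$), and $|\Bz_1+\Bz'|<|\Bz_1|$. The shortened move $\Bz_1+\Bz'$ is then again decomposed conformally and the argument recurses on the $L_1$-norm. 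If you want to repair your version, you should either adopt this reduction to $(\Bz_1^-,\Bz_1^+)$ before invoking Condition \ref{cond:1a}, or explicitly justify why the upper bound of $1$ at $i_{2m}$ in the ambient tables $\Bx,\By$ never obstructs both of the two applications $-\Bz'$ to $\Bx$ and $+\Bz'$ to $\By$; as written, the latter claim is false in general and the proof does not go through.
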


\begin{proof}
As in the proof of Proposition \ref{prop:graver}, consider 
(\ref{eq:conformal-sum}), where
$\Bx$, $\By$ are two zero-one tables in the same fiber.
By induction on the number $K$ of primitive moves,
it suffices to prove the distance reduction 
for $\By = \Bx+\Bz_1$.
 By the same argument as in Lemma 2.4 of \cite{takemura-aoki-2004aism},
 it suffices  
 to check the distance reduction by moves from $\cB$ in 
 moving from $\Bz_1^-$ to $\Bz_1^+$.
 If $\Bz_1\in \cB$, we can reduce the distance at once.  If 
 $\Bz_1 \in \cB_0 \setminus \cB$, 
we can find $\Bz \in \cB$ which can be applied
either to $\Bz_1^-$ or $\Bz_1^+$ such that  $|\Bz_1|$ is reduced.  The resulting move
can now be decomposed into a conformal sum of primitive moves and we can recursively
use the distance reduction argument.  This proves the proposition.
\end{proof}

By Proposition \ref{prop:gen2}, once $\cB_0$ is given  we can
remove some elements from $\cB_0$ and obtain a smaller set of moves $\cB$ as follows.
Find a pair $\Bz, \tilde \Bz\in \cB_0$, 
$\Bz \neq \tilde \Bz$, such that $\Bz + \tilde \Bz$ has just one sign
cancellation, i.e.\  
there is only one cell $i$ such that 
$z(i) \tilde z(i) = -1$.  
If $\Bz+\tilde \Bz\in \cB_0$ 
then we can remove $\Bz+\tilde \Bz$ from $\cB_0$ 
and still guarantee connectivity of zero-one tables.


As the last topic of this section 
we clarify the interpretation of 
Condition \ref{cond:1}
by discussing a weaker condition
which is equivalent to the existence of distance reducing Markov basis
consisting of square-free moves of degree two.
In our previous works  (e.g.\ \cite{aoki-takemura-2005jscs},
\cite{decomposable}) we have obtained such Markov bases 
and used a similar argument as in the proof of Theorem \ref{thm:1}.
By omitting the requirement $x(i_4) \le y(i_4)$ in Condition \ref{cond:1}
consider the following weaker condition:

\begin{condition}[Existence of weak crossing pattern]
\label{cond:2}
For every fiber and every $\Bx, \By$, $\Bx\neq \By$, in the same fiber,
there exist distinct cells $i_1, i_2, i_3,i_4$ such that
$x(i_1)> y(i_1), x(i_2)> y(i_2), x(i_3) < y(i_3)$
or 
$y(i_1)> x(i_1), y(i_2)> x(i_2), y(i_3) < x(i_3)$
and 
$\Bz= \Be_{i_3} + \Be_{i_4}-\Be_{i_1}-\Be_{i_2}$ is a move.
\end{condition}

We now show that Condition \ref{cond:2} is equivalent to
the existence of a distance reducing Markov basis consisting of square-free
moves of degree two.

\begin{proposition}
There exists a distance reducing Markov basis 
consisting of square-free  moves of degree two if and only if
Condition \ref{cond:2} holds.
\end{proposition}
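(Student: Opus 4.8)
The plan is to prove both implications by tracking, cell by cell, how the $L_1$-distance $|\Bx-\By|$ changes when a single square-free move of degree two is applied. The observation I would record first is that, under the homogeneity assumption (\ref{homo-toric}), every square-free move of degree two has the form $\Bz=\Be_{i_3}+\Be_{i_4}-\Be_{i_1}-\Be_{i_2}$ with $i_1,i_2,i_3,i_4$ four \emph{distinct} cells: degree two forces the positive part to be $\Be_{i_3}+\Be_{i_4}$, homogeneity forces the negative part to have the same total and hence to be $\Be_{i_1}+\Be_{i_2}$, and the positive and negative supports are disjoint. Consequently applying $\pm\Bz$ changes each of the four distinct coordinates by exactly one unit, and since for an integer $a$ both $|a+1|-|a|$ and $|a-1|-|a|$ are equal to $\pm1$, each of the four cells contributes exactly $+1$ or $-1$ to the change in $|\Bx-\By|$. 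In particular the total change is an even integer in $\{-4,-2,0,2,4\}$, so strict reduction means the change is $-4$ or $-2$, i.e. at least three of the four cells contribute $-1$.

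For the ``if'' direction I would take $\cB$ to be the (necessarily finite) set of all moves $\Be_{i_3}+\Be_{i_4}-\Be_{i_1}-\Be_{i_2}$ with distinct cells that lie in $\ker A$, and show it is distance reducing; being distance reducing it is then automatically a Markov basis. Given $\Bx\neq\By$ in a common fiber, Condition \ref{cond:2} supplies such a $\Bz$. In the first pattern ($x(i_1)>y(i_1)$, $x(i_2)>y(i_2)$, $x(i_3)<y(i_3)$) I would add $\Bz$ to $\Bx$: since $x(i_1),x(i_2)\ge 1$ the table $\Bx+\Bz$ stays nonnegative, the distance drops by one at each of $i_1,i_2,i_3$ and increases by at most one at the unconstrained cell $i_4$, for a net change of at most $-2$. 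In the second pattern the symmetric move is to add $\Bz$ to $\By$, which by the same computation keeps $\By+\Bz$ nonnegative and strictly reduces $|\Bx-(\By+\Bz)|$. This is the same bookkeeping as in the proof of Theorem \ref{thm:1}.

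For the ``only if'' direction I would start from a distance reducing Markov basis $\cB$ of square-free degree-two moves and fix $\Bx\neq\By$ in a fiber. Distance reduction yields some element of $\cB$ and $\epsilon=\pm1$ reducing the distance; replacing the element by its $\epsilon$-multiple (again of the square-free degree-two form) and, if the reducing step acts on $\By$, swapping the roles of $\Bx$ and $\By$ (Condition \ref{cond:2} is symmetric in them), I may assume $\Bx+\Bz$ lies in the fiber with $|\Bx+\Bz-\By|<|\Bx-\By|$ and $\Bz=\Be_{i_3}+\Be_{i_4}-\Be_{i_1}-\Be_{i_2}$. A cell in the negative part $\{i_1,i_2\}$ contributes $-1$ exactly when $x>y$ there, and a cell in the positive part $\{i_3,i_4\}$ contributes $-1$ exactly when $x<y$ there. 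By the parity observation at least three cells contribute $-1$, so I would split into cases by which cell (if any) contributes $+1$: if both $i_1,i_2$ reduce, then $x(i_1)>y(i_1)$, $x(i_2)>y(i_2)$ together with $x<y$ at one of $i_3,i_4$ give the first pattern directly; if exactly one of $i_1,i_2$ reduces then both $i_3,i_4$ must reduce, and relabelling so that the two positive-part cells become the dominant cells (using $-\Bz$) yields the second pattern. The remaining case, where neither of $i_1,i_2$ reduces, cannot occur, since then at most two cells would reduce.

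The main work is this final case analysis, and what makes it go through is the rigidity coming from the move being square-free of degree two: exactly four distinct cells are touched, each by a unit, so the strict (hence at least $-2$) drop in an even-valued total pins down the sign of $x-y$ on three of the four cells and thereby forces one of the two crossing patterns. I expect the only real subtleties to be bookkeeping ones --- orienting $\Bz$ versus $-\Bz$ and using the $\Bx\leftrightarrow\By$ symmetry of Condition \ref{cond:2} --- rather than anything structural; the homogeneity assumption is used only to guarantee the clean four-cell form of a square-free degree-two move.
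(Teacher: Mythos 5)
Your argument is correct and is essentially the paper's own proof: the ``if'' direction is the same distance-reduction bookkeeping as in Theorem \ref{thm:1}, and your parity/case analysis for the ``only if'' direction is exactly the paper's observation that $|\Bx-\By|-|(\Bx+\Bz)-\By|=2g(i_1,i_2,i_3,i_4,\Bx,\By)$, so strict reduction forces at least three of the four sign inequalities and hence one of the two crossing patterns (after orienting $\pm\Bz$ and using the $\Bx\leftrightarrow\By$ symmetry). No gaps; you merely spell out the case analysis that the paper leaves as ``easy to see.''
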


\begin{proof}
It is clear that  under Condition \ref{cond:2} the set $\cB$ of moves in
(\ref{eq:goodmoves}) is distance reducing.  Therefore it suffices to show the converse.
Let $\cB$ be a distance reducing
Markov basis consisting of square-free  moves of degree two.
Let $\Bx, \By$, $\Bx\neq \By$, be in the same fiber.  We can find
$\pm \Bz\in \cB$ such that $\Bz$ is applicable to $\Bx$ or $\By$ and
$|(\Bx+\Bz)-\By| < |\Bx - \By|$ or 
$|\Bx-(\By+\Bz)| < |\Bx - \By|$, respectively.
For  $\Bx, \By$ in the same fiber and for 
distinct indices $i_1, i_2, i_3, i_4$ let 
\begin{align*}
g(i_1, i_2, i_3, i_4, \Bx, \By)&=I(x(i_1)> y(i_1))
+ I(x(i_2)> y(i_2))\\
& \qquad
+ I(x(i_3) < y(i_3)) + I(x(i_4) < y(i_4))-2,
\end{align*}
where $I(E)$ denotes the indicator function of the event $E$.
When $\Bz$ can be added to $\Bx$, we have
\[
|\Bx - \By| - |(\Bx+\Bz) - \By|=  2 g(i_1, i_2, i_3, i_4, \Bx, \By).
\]
Therefore 
\[
|\Bx - \By| - |(\Bx+\Bz) - \By| > 0 \quad \Leftrightarrow \quad
g(i_1, i_2, i_3, i_4, \Bx, \By)>0,
\]
i.e., at least 3 inequalities among $x(i_1)> y(i_1),
x(i_2)> y(i_2), x(i_3) < y(i_3), x(i_4) < y(i_4)$ hold.
It is easy to see that then Condition \ref{cond:2} holds.
Similarly if $\Bz$ can be added to $\By$ and
$|\Bx-(\By+\Bz)| < |\Bx - \By|$, then Condition \ref{cond:2} holds.
\end{proof}

\section{Connectivity results for some models}
\label{sec:models}

In this section we investigate connectivity of zero-one tables for some common models
for contingency tables.

\subsection{Rasch model}
\label{subsec:Rasch}
Rasch model (\cite{Rasch1980}) has long received much attention in the
item response theory.  
Suppose that $I$ persons take a test with $J$ dichotomous questions.
Let $x_{ij}\in \{0,1\}$ be a response to the $j$th question of the $i$th
person.  
Hence the $I \times J$ table $\bm{x} = (x_{ij})$ is
considered as a two-way contingency table with zero-one entries.
Assume that each $x_{ij}$ is independent.
Then the Rasch model is expressed as
\begin{equation}
 \label{model:Rasch_1dim}
  P(x_{ij}=1) = 
  \frac{\exp (\alpha_i - \beta_j)}{1+\exp (\alpha_i - \beta_j)}, 
\end{equation}
where $\alpha_i$ is an individual's latent ability parameter and 
$\beta_j$ is an item's difficulty parameter.
Then the set of row sums $x_{i+} = \sum_{j=1}^J x_{ij}$ 
and column sums $x_{+j} = \sum_{i=1}^I x_{ij}$ 
is the sufficient statistic for $\alpha_i$ and $\beta_j$.

The Rasch model has been extensively studied and practically used for
evaluating educational and psychological tests. 
Many inference procedures have been developed
(e.g. \cite{Glas-Verhelst1995}) and most of them rely on asymptotic
theory.  
However, as \cite{Rasch1980} pointed out, 
a sufficiently large sample size is not necessarily expected in practice.
In such cases the asymptotic inference may be inappropriate. 

\cite{Rasch1980} proposed to use an exact test procedure.
As mentioned in \cite{Rasch1980}, 
the conditional distribution of zero-one tables given person scores
and item totals is easily shown to be uniform. 
In order to implement exact test for Rasch model via Markov basis
technique, we need a set of moves which connects every fiber of
two-way zero-one tables with fixed row and column sums. 
\cite{Ryser1957} first showed that the set of basic moves 
\[
 \begin{array}{rrr}
  & i & i^\prime \\
  j & 1 & -1 \\
  j^\prime & -1 & 1\\
 \end{array}
\]
in two-way complete independence model connects any fiber of zero-one
tables with fixed row and column sums. 
Since then, many Monte Carlo procedures via Markov basis technique to
compute distribution of test statistics 
to test the goodness-of-fit of the Rasch model have been proposed 
(e.g.\ \cite{Besag-Clifford1989}, 
\cite{ponocny2001}, \cite{Cobb-Chen2003}). 
\cite{chen-small} provided a computationally more efficient Monte
Carlo procedure for implementing exact tests by using sequential
importance sampling.  

In the framework of the present paper, the
connectivity by basic moves is a consequence of Theorem \ref{thm:1}
and Proposition \ref{prop:graver}.
The Rasch model can be regarded as the Lawrence lifting of the independence model
for $I\times J$ tables.  
Assume that $i_1,i_2,\ldots,i_r$ and $j_1,j_2,\ldots,j_r$
are all distinct. 
Denote $i_{[r]}=(i_1,\ldots,i_r)$, $j_{[r]}=(j_1,\ldots,j_r)$. 
Then a loop of degree $r$ 
 \[
 \bm{z}_r(i_{[r]};j_{[r]}) 
 = \{z_{ij}\}, \quad
 1 \leq i_1,\ldots ,i_r \leq I, \ 1 \leq j_1,\ldots ,j_r \leq J,
 \] 
 is defined by a move such that 
 \[
 \begin{array}{l}
  z_{i_1j_1} = z_{i_2j_2} = \cdots = z_{i_{r-1}j_{r-1}} = z_{i_rj_r} = 1,\\
  z_{i_1j_2} = z_{i_2j_3} = \cdots = z_{i_{r-1}j_r} = z_{i_rj_1} = -1,
 \end{array}
\]
and all the other elements are zero (e.g. \cite{aoki-takemura-2005jscs}). 
A loop of degree $r$ is written as 
\[
 \bm{z} =
 \begin{array}{c|rrrrrr|}
  \multicolumn{1}{r}{}
  & \multicolumn{1}{r}{j_1} & j_2 & \cdots & \cdots & j_{r-1} 
   & \multicolumn{1}{r}{j_r}\\
\cline{2-7}
  i_1 & 1 & -1 & 0  & 0 & \dots &0 \\
  i_2 & 0 & 1  & -1 & 0 & \dots &0\\
  \vdots & \multicolumn{1}{|c}{\vdots} & & & & & \multicolumn{1}{c|}{\vdots}\\
  i_{r-1} & 0 & 0  & \dots  & \dots & 1 & -1 \\
  i_{r} & -1 & 0 & \dots & \dots  &  0 & 1 \\ \cline{2-7}
 \end{array}\; . 
\]

From p.382 of \cite{diaconis-sturmfels} we know that the set of loops of
degree $r$,  
$r \le \min(I,J)$
forms the Graver basis for the complete independence model of  
$I \times J$  contingency tables. 
Since the set of basic moves satisfies Condition 
\ref{cond:1}, where $\Bx$ is the positive part and $\By$ is the negative part of
these loops, it follows that the set of basic moves connects $I\times J$ 
zero-one tables with fixed row and column sums.

\subsection{Many-facet  Rasch model}
The many-facet Rasch model is an extension of the Rasch model to
multiple items and polytomous responses (e.g.\ \cite{linacre1989},
\cite{linacre1994}) and has also been extensively used in practice for
evaluating essay exams and scoring systems of judged sports
(e.g. \cite{yamaguchi1999}, \cite{zhu-ennis-chen-1998},
\cite{basturk-2008}).  

Suppose that $I_1$ articles are rated by $I_2$ reviewers from $I_3$
aspects on the grade of $I_4$ scales from $0$ to $I_4-1$.
$x_{i_1 i_2 i_3 i_4}=1$ if the reviewer $i_2$ rates the article  $i_1$
as the $i_4$th grade from the aspect  $i_3$  and otherwise 
$x_{i_1 i_2 i_3 i_4}=0$. 
Then $\bm{x}=\{x_{i_1 i_2 i_3 i_4}\}$ is an 
$I_1 \times I_2 \times I_3 \times I_4$ zero-one table.
We note that 
$\bm{x}$ satisfies 
$x_{i_1 i_2 i_3 +}:=\sum_{i_4=0}^{I_4-1} x_{i_1 i_2 i_3 i_4} = 1$
for all $i_1$, $i_2$ and $i_3$.
Then the three-facet Rasch model for $\bm{x}$ is expressed by 
\begin{equation}
 \label{model:3-facet}
 P(x_{i_1 i_2 i_3 i_4}=1) = 
 \frac{
 \exp\left[ 
 i_4(\beta_{i_1} - \beta_{i_2} - \beta_{i_3}) - \beta_{i_4}
 \right]
 }
 {
 \sum_{i_4=0}^{I_4-1}
 \exp\left[ 
 i_4(\beta_{i_1} - \beta_{i_2} - \beta_{i_3}) - \beta_{i_4}
 \right]
 }.
\end{equation}
In general, the $V$-facet Rasch model is defined as follows.
Let $\bm{x} = \{x(i)\}$, 
$i := (i_1,\ldots,i_{V+1})$ be an 
$I_1 \times \cdots \times I_{V+1}$ zero-one table. 
Assume that ${\mathcal I}_{V+1} = \{0,\ldots,I_{V+1}-1\}$ and that 
$\bm{x}$ satisfies 
\[
x(i_{\{1, \ldots, V\}}) := \sum_{i_{V+1}=0}^{I_{V+1}-1} x(i)=1.
\]
Then the $V$-facet Rasch model is expressed as
\begin{equation}
 \label{model:many-facet}
 P(x(i)=1) = 
 \frac{
 \exp\left[ 
 i_{V+1}(\beta_{i_1} - \beta_{i_2} - \ldots - \beta_{i_{V}}) - \beta_{i_{V+1}}
 \right]
 }
 {
 \sum_{i_{V+1}=0}^{I_{V+1}-1}
 \exp\left[ 
 i_{V+1}(\beta_{i_1} - \beta_{i_2} - \ldots - \beta_{i_V}) - \beta_{i_{V+1}}
 \right]
 }.
\end{equation}
When $V=2$, $I_3=2$ and $\beta_{i_3} = \mathrm{const}$ 
for $i_3 \in \{0,1\}$, the model coincides with the 
Rasch model 
(\ref{model:Rasch_1dim}).
Define $\bm{t}^0$ by 
\[
 \bm{t}^0 = 
  \left\{
   \left.
   \sum_{i_{V+1}=0}^{I_{V+1}-1} i_{V+1} \cdot x(i_{\{v,V+1\}}) \; 
   \right\vert \; 
  i_{\{v,V+1\}} \in {\mathcal I}_{\{v,V+1\}}, \; 
  v=1,\ldots,V
  \right\}. 
\]  
Then the sufficient statistic $\bm{t}$ is written by 
\[
  \bm{t} = \bm{t}^0 \cup 
  \{x(i_{V+1}) \mid i_{V+1} \in {\mathcal I}_{V+1}\}.
\]
When $\beta_{i_{V+1}}$ is constant for 
$i_{V+1} \in {\mathcal I}_{V+1}$, $\bm{t}$ is written by 
\[
  \bm{t} = \bm{t}^0 \cup 
  \{x^+\}, 
\]
where $x^+ := \sum_{i \in {\mathcal I}} x(i)$.
In the case of the three-facet Rasch model (\ref{model:3-facet}), 
$\bm{t}$ is expressed as follows, 
\begin{align*}
 \bm{t} & = 
 \left\{
 \sum_{i_4 = 0}^{I_4} i_4 x_{i_1 + + i_4},\;
 i_1 \in {\mathcal I}_1, \quad
 \sum_{i_4 = 0}^{I_4} i_4 x_{+ i_2 + i_4},\;
 i_2 \in {\mathcal I}_2, 
 \right. \\
 & \qquad\qquad\qquad
 \sum_{i_4 = 0}^{I_4} i_4 x_{+ + i_3 i_4},  \; 
 i_3 \in {\mathcal I}_3, \quad
 x_{+++ i_4}, \; i_4 \in {\mathcal I}_4 
 \Biggr\}.
\end{align*}

In order to implement exact tests for the many-facet Rasch
model, we need a set of moves which connects any fiber 
$\tilde{\mathcal F}_{\bm{t}}$ of zero-one tables.  
In general, however, it is not easy to derive such a set of moves.
As seen in the previous section, in the case of the Rasch model
(\ref{model:Rasch_1dim}),  
the set of basic moves for two-way complete independence model connects
any fiber. 
For the many-facet Rasch model (\ref{model:many-facet}), however, the
basic moves do not necessarily connect all fibers. 
Consider the case where $V=3$ and $I_{4}=2$. 
In this case, $\bm{t}^0$ is written as
\[
  \bm{t}^0 = 
  \{
  x_{i_1 + + 1}, x_{+ i_2 + 1}, x_{+ + i_3 1} \mid 
  i_{v} \in {\mathcal I}_{v}, v=1,2,3
  \}.
\]
Since $x(i_4) = \sum_{i_v \in {\mathcal I}_v} x(i_{\{v,4\}})$ for 
$v=1,2,3$, 
$\bm{t}^0$ is the sufficient statistic. 
$\bm{t}^0$ is equivalent to the sufficient statistics of three-way
complete independence model for $(i_4=1)$-slice of $\bm{x}$.
From Proposition \ref{prop:graver}, the set of square-free moves of the
Graver basis for three-way complete independence
model connects any fiber $\tilde{\mathcal F}_{\bm{t}}$. 
Table \ref{tab:graver} shows the number of square-free moves of the
Graver basis for $I_1 \times I_2 \times I_3$ three-way complete
independence model computed via 4ti2 (\cite{4ti2}).
We see that when the number of levels is greater than two, 
the sets include moves with degree greater than two.
This fact does not necessarily imply that higher degree moves are
required to connect every  fiber for the three-way complete independence
model.  
However we can give an example which shows that the degree two moves do
not connect all fibers of the three-way complete independence model. 

\begin{table}[htbp]
 \centering
 \caption{The number of square-free moves of the Graver basis for
 three-way complete independence model} 
 \label{tab:graver}
 \begin{tabular}{crrrrr} \hline
  & \multicolumn{5}{c}{degree of moves}\\
  $I_1 \times I_2 \times I_3$ & 2 & 3 & 4 & 5 & 6\\ \hline
  $2 \times 2 \times 2$ & 12 & 0 & 0 & 0 & 0\\
  $2 \times 2 \times 3$ & 33 & 48 & 0 & 0 & 0\\
  $2 \times 2 \times 4$ & 64 & 192 & 96 & 0 & 0\\ 
  $2 \times 2 \times 5$ & 105 & 480 & 480 & 0 & 0\\ 
  $2 \times 3 \times 3$ & 90 & 480 & 396 & 0 & 0\\ 
  $2 \times 3 \times 4$ & 174 & 1632 & 5436 & 1152 & 0\\ 
  $2 \times 3 \times 5$ & 285 & 3840 & 23220 & 33120 & 720\\ 
  $3 \times 3 \times 3$ & 243 & 3438 & 19008 & 12312 & 0\\ \hline
 \end{tabular}
\end{table}

\begin{example}[A fiber for $3 \times 3 \times 3$ three-way complete
 independence model] 
Consider the following two zero-one tables $\bm{x}$ and $\bm{y}$ in the
same fiber of three-way complete independence model. 
\[
 \bm{x} := 
 \begin{array}{cc|ccc|} 
  & \multicolumn{1}{c}{~}  &   & k &  \multicolumn{1}{c}{~} \\
  & \multicolumn{1}{c}{~} & 1 & 2 &  \multicolumn{1}{c}{3}\\ \cline{3-5}
  & 1 & 0 & 0 & 0\\
j & 2 & 0 & 0 & 1\\
  & 3 & 0 & 0 & 1\\ \cline{3-5}
  & \multicolumn{1}{c}{~}  & \multicolumn{3}{c}{i=1}
 \end{array} \quad 
 \begin{array}{cc|ccc|} 
  & \multicolumn{1}{c}{~}  &   &  &  \multicolumn{1}{c}{~} \\
  & \multicolumn{1}{c}{~} &  &  &  \multicolumn{1}{c}{~}\\ \cline{3-5}
  &  & 0 & 1 & 1\\
  &  & 0 & 1 & 1\\
  &  & 1 & 1 & 1\\ \cline{3-5}
  & \multicolumn{1}{c}{~}  & \multicolumn{3}{c}{i=2}
 \end{array} \quad 
 \begin{array}{cc|ccc|} 
  & \multicolumn{1}{c}{~}  &   &  &  \multicolumn{1}{c}{~} \\
  & \multicolumn{1}{c}{~} &  &  &  \multicolumn{1}{c}{}\\ \cline{3-5}
  &  & 0 & 0 & 0\\
 &  & 0 & 0 & 1\\
  &  & 1 & 1 & 1\\ \cline{3-5}
  & \multicolumn{1}{c}{~}  & \multicolumn{3}{c}{i=3}
 \end{array} \quad 
\]
\[
 \bm{y} := 
 \begin{array}{cc|ccc|} 
  & \multicolumn{1}{c}{~}  &   & k &  \multicolumn{1}{c}{~} \\
  & \multicolumn{1}{c}{~} & 1 & 2 &  \multicolumn{1}{c}{3}\\ \cline{3-5}
  & 1 & 0 & 0 & 0\\
j & 2 & 0 & 0 & 0\\
  & 3 & 0 & 1 & 1\\ \cline{3-5}
  & \multicolumn{1}{c}{~}  & \multicolumn{3}{c}{i=1}
 \end{array} \quad 
 \begin{array}{cc|ccc|} 
  & \multicolumn{1}{c}{~}  &   &  &  \multicolumn{1}{c}{~} \\
  & \multicolumn{1}{c}{~} &  &  &  \multicolumn{1}{c}{~}\\ \cline{3-5}
  &  & 0 & 0 & 1\\
  &  & 1 & 1 & 1\\
  &  & 1 & 1 & 1\\ \cline{3-5}
  & \multicolumn{1}{c}{~}  & \multicolumn{3}{c}{i=2}
 \end{array} \quad 
 \begin{array}{cc|ccc|} 
  & \multicolumn{1}{c}{~}  &   &  &  \multicolumn{1}{c}{~} \\
  & \multicolumn{1}{c}{~} &  &  &  \multicolumn{1}{c}{}\\ \cline{3-5}
  &  & 0 & 0 & 1\\
 &  & 0 & 0 & 1\\
  &  & 0 & 1 & 1\\ \cline{3-5}
  & \multicolumn{1}{c}{~}  & \multicolumn{3}{c}{i=3}
 \end{array}.
\]
The difference of the two tables is 
\[
 \bm{z} =
 \begin{array}{|ccc|}\hline
  0 & 0 & 0\\
  0 & 0 & 1\\
  0 & -1 & 0\\ \hline
 \end{array} \quad 
 \begin{array}{|ccc|}\hline 
  0 & 1 & 0\\
  -1 & 0 & 0\\
  0 & 0 & 0\\ \hline
 \end{array} \quad 
 \begin{array}{|ccc|}\hline 
  0 & 0 & -1\\
  0 & 0 & 0\\
  1 & 0 & 0\\ \hline
 \end{array}
\]
and we can easily check that $\bm{z}$ is a move for the three-way
complete independence model.

Let $\bar\Delta$ be the set of degenerate variables defined in
\cite{decomposable}. 
Then degree two moves for three-way complete independence model are
 classified into the following four patterns.
\begin{enumerate}
 \item $\bar\Delta = \{1,2,3\}$ $:$
      $ \displaystyle{
      \begin{array}{r|rr|} 
       \multicolumn{1}{r}{~}& i_3 & \multicolumn{1}{r}{i_3^\prime} \\ \cline{2-3}
       i_2 & 1 & 0\\
       i_2^\prime & 0 & -1\\ \cline{2-3}
       \multicolumn{1}{r}{~} & \multicolumn{2}{r}{i_1}
      \end{array}, \quad 
      \begin{array}{r|rr|} 
       \multicolumn{1}{r}{~}& i_3 & \multicolumn{1}{r}{i_3^\prime} \\ \cline{2-3}
       i_2 & -1 & 0\\
       i_2^\prime & 0 & 1\\ \cline{2-3}
       \multicolumn{1}{r}{~} & \multicolumn{2}{r}{i_1^\prime}
      \end{array}}\; ; 
      $
 \item $\bar\Delta = \{1,2\}$ $:$
      $ \displaystyle{
      \begin{array}{r|rr|} 
       \multicolumn{1}{r}{~}& i_3 & \multicolumn{1}{r}{i_3^\prime} \\ \cline{2-3}
       i_2 & 1 & 0\\
       i_2^\prime & -1 & 0\\ \cline{2-3}
       \multicolumn{1}{r}{~} & \multicolumn{2}{r}{i_1}
      \end{array}, \quad 
      \begin{array}{r|rr|} 
       \multicolumn{1}{r}{~}& i_3 & \multicolumn{1}{r}{i_3^\prime} \\ \cline{2-3}
       i_2 & 0 & -1\\
       i_2^\prime & 0 & 1\\ \cline{2-3}
       \multicolumn{1}{r}{~} & \multicolumn{2}{r}{i_1^\prime}
      \end{array}}\; ;
       $
 \item $\bar\Delta = \{1,3\}$ $:$
      $ \displaystyle{
      \begin{array}{r|rr|} 
       \multicolumn{1}{r}{~}& i_3 & \multicolumn{1}{r}{i_3^\prime} \\ \cline{2-3}
       i_2 & 1 & -1\\
       i_2^\prime & 0 & 0\\ \cline{2-3}
       \multicolumn{1}{r}{~} & \multicolumn{2}{r}{i_1}
      \end{array}, \quad 
      \begin{array}{r|rr|} 
       \multicolumn{1}{r}{~}& i_3 & \multicolumn{1}{r}{i_3^\prime} \\ \cline{2-3}
       i_2 & -1 & 1\\
       i_2^\prime & 0 & 0\\ \cline{2-3}
       \multicolumn{1}{r}{~} & \multicolumn{2}{r}{i_1^\prime}
      \end{array}}\; ;
       $
 \item $\bar\Delta = \{2,3\}$ $:$
      $ \displaystyle{
       \begin{array}{r|rr|} 
       \multicolumn{1}{r}{~}& i_3 & \multicolumn{1}{r}{i_3^\prime} \\ \cline{2-3}
       i_2 & 1 & -1\\
       i_2^\prime & -1 & 1\\ \cline{2-3}
       \multicolumn{1}{r}{~} & \multicolumn{2}{r}{i_1}
      \end{array}}.
       $
\end{enumerate}
However it is easy to check that if we apply any move in this class to 
$\bm{x}$ or $\bm{y}$, $-1$ or $2$ has to appear.
Therefore we cannot apply any degree two moves to both $\bm{x}$
and $\bm{y}$. 
Hence a degree three move is required to connect this fiber.
\end{example}

This example indicates that it may be difficult to obtain a set of
moves which connects every fiber of the many-facet Rasch model
theoretically.   
As seen in Table \ref{tab:graver}, the number of square-free moves in
the Graver basis is too large even for three-way tables.
When the number of cells is greater than 100, it seems to be difficult
to compute the Graver basis via 4ti2 in a practical length of time.  
Hence implementations of exact tests by using the Graver basis is limited
to very small models at this point.  
The clarification of the structure of the set of moves which connects
all zero-one fibers for more general many-facet Rasch model 
is important to implement exact tests. 
However this problem seems to be difficult at this point and is left as
a future task.   

\subsection{Two-way zero-one tables with structural zeros}
Two-way zero-one tables with structural zeros arise in many practical
problems, including ecological studies and social networks.
Let $\bm{x}=\{x_{ij}\}$ be an $I \times J$ zero-one table and denote by 
$S \subset {\mathcal I}$  the set of cells that are not structural
zeros. 
We consider the quasi-independence model  (\cite{BFH1975}) as a null
hypothesis,  
\begin{equation}
 \label{model:quasi-indep}
 \left\{
 \begin{array}{rll}
 \log P(x_{ij}=1) & =  
   \mu + \alpha_{i} + \beta_{j}, &
   (i,j) \in S\\
 P(x_{ij}=1) & = 0, & \text{otherwise}.
 \end{array}
 \right.
\end{equation}
The sufficient statistic $\Bt$ for the models is the set of row and
column sums. 
Denote by ${\mathcal B}(S)$ the set of moves for the quasi-independence model 
(\ref{model:quasi-indep}). 
Then ${\mathcal B}(S)$ is written by 
\[
 {\mathcal B}(S) = \{\bm{z} = \{z_{ij}\} \mid z_{i+} = z_{+j} = 0, \; 
 z_{ij}=0 \; \text{ for } \; (i,j) \in S^c\}.
\]
We denote a structural zero cell by $[0]$ to distinguish it from a
sampling zero cell.     

\cite{rao-etal-1996sankhya} discussed the connectivity of zero-one
tables in the case where $I=J$ and all the diagonal elements are
structural zeros and provided a Markov basis for zero-one tables.
\cite{roberts-2000SocialNetworks} applied the results to analyses of
social networks and proposed an efficient implementation of exact tests
of quasi-independent model (\ref{model:quasi-indep}) via MCMC. 
\cite{Chen-2007JCGS} proposed a procedure for implementing exact tests
via sequential importance sampling for general two-way zero-one tables
with structural zeros. 
In this section we extend the argument of \cite{rao-etal-1996sankhya} 
to general two-way zero-one tables with structural zeros and provide a
Markov basis for zero-one tables in the quasi-independence model.

For general two-way contingency tables, \cite{aoki-takemura-2005jscs}
provided a complete description of the unique minimal Markov basis for
the quasi-independence model (\ref{model:quasi-indep}).  
A loop $\bm{z}_r(i_{[r]}; j_{[r]})$ is defined as in Section
\ref{subsec:Rasch}.  
When $\bm{z}_r(i_{[r]}; j_{[r]})$ is a move in ${\mathcal B}(S)$, 
$\bm{z}_r(i_{[r]}; j_{[r]})$ is called a loop on $S$. 

\begin{definition}[\cite{aoki-takemura-2005jscs}]
 A loop $\bm{z}_r(i_{[r]}; j_{[r]})$ on $S$ is called df 1 if 
 ${\mathcal I}(i_{[r]}; j_{[r]})$ 
 does not contain  support of any loop on $S$ of 
 degree $2,\ldots ,r-1$, where
 \[
 {\mathcal I}(i_{[r]}; j_{[r]}) =
 \{(i,j)\ |\ i \in \{i_1,\ldots ,i_r\},j\in\{j_1,\ldots ,j_r\}\}.
 \]
 $\bm{z}_r(i_{[r]}; j_{[r]})$ is df 1 if and only if
 ${\mathcal I}(i_{[r]}; j_{[r]})$ contains exactly two elements
 in $S$ in every row and column.
\end{definition}

The following integer arrays are examples of 
df 1 loops of degree two, three and four on some $S$.
\begin{equation}
 \label{df1-loop}
\begin{array}{|ccccc|}\hline
  +1           & -1  & 0 & 0 & 0\\
  -1           & +1  & 0 & 0 & 0\\
  0            & 0   & 0 & 0 & 0\\
  0            & 0   & 0 & 0 & 0\\ \hline
\end{array}
\hspace*{5mm}
\begin{array}{|ccccc|}\hline
  +1           & -1  & [0]& 0 & 0\\
  -1           & [0] & +1 & 0 & 0\\
  \mbox{$[0]$} & +1  & -1 & 0 & 0\\
  0            & 0   &  0 & 0 & 0\\ \hline
\end{array}
\hspace*{5mm}
\begin{array}{|ccccc|}\hline
  +1           & -1  & [0]& [0]& 0\\
  -1           & [0] & +1 & [0]& 0\\
  \mbox{$[0]$} & +1  & [0]& -1 & 0\\
  \mbox{$[0]$} & [0] & -1 & +1 & 0\\ \hline
\end{array}
\end{equation}
We note that a degree $2$ loop $\bm{z}_2(i_1,i_2; j_1,j_2)$ is a basic
move. 

Denote by ${\mathcal B}_{\mathrm{df}1}(S)$ the set of 
df 1 loops of degree $2,\ldots ,\min\{I,J\}$. 
For general contingency tables, \cite{aoki-takemura-2005jscs} showed
that ${\mathcal B}_{\mathrm{df}1}(S)$ forms two-way unique minimal
Markov basis for the quasi-independence model (\ref{model:quasi-indep}).  
By following the argument in \cite{aoki-takemura-2005jscs}, however, 
we can also prove that ${\mathcal B}_{\mathrm{df}1}(S)$ connects 
every fiber $\tilde{\mathcal F}_\Bt$ of zero-one tables. 

\begin{theorem}
 \label{thm:main-str-zeros}
 The set of df $1$ loops of degree $2,\ldots ,\min\{I,J\}$ connects
 every fiber $\tilde{\mathcal F}_\Bt$ of zero-one tables of 
 the quasi-independence model (\ref{model:quasi-indep}). 
\end{theorem}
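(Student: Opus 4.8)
The plan is to exhibit $\mathcal B_{\mathrm{df}1}(S)$ as a subset of the square-free Graver moves $\cB_0$ of Proposition \ref{prop:graver} and to verify Condition \ref{cond:1a} for it, after which Proposition \ref{prop:gen2} delivers the conclusion. First I would record the relevant structure of the quasi-independence model. Every move of $\mathcal B(S)$ is a move of the full independence model supported on $S$, and since a conformal decomposition has no sign cancellation, every conformal summand of an $S$-supported move is again $S$-supported; hence the primitive moves of $\mathcal B(S)$ are exactly the primitive loops of the independence model that are supported on $S$. All such loops have entries in $\{-1,0,+1\}$, so $\cB_0$ coincides with the set of primitive loops on $S$ and is square-free. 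Moreover a df $1$ loop, containing the support of no shorter loop in its index set, admits no conformal proper decomposition and is therefore primitive, whence $\mathcal B_{\mathrm{df}1}(S)\subseteq\cB_0$. Finally a df $1$ loop of degree $r$ occupies $r$ distinct rows and $r$ distinct columns, so its degree automatically lies in $\{2,\ldots,\min\{I,J\}\}$, matching the statement of the theorem.

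The core is a structural lemma in the spirit of \cite{aoki-takemura-2005jscs}. Let $\bm{z}=\bm{z}_r(i_{[r]};j_{[r]})\in\cB_0\setminus\mathcal B_{\mathrm{df}1}(S)$, i.e.\ a primitive loop that is not df $1$. Then $\mathcal I(i_{[r]};j_{[r]})$ contains more than two cells of $S$ in some row or column, so there is a ``chord'' cell of $S$ lying in the index rectangle but off the support of $\bm{z}$; this chord together with an arc of the cycle $\bm{z}$ closes up into a strictly shorter loop on $S$. Iterating the chord-splitting, I would extract a df $1$ loop $\bm{z}'$ of degree $m<r$ whose support lies in $\mathcal I(i_{[r]};j_{[r]})$ and which meets $\supp(\bm{z})$ in all but exactly one of its $2m$ cells, the exceptional cell being a chord at which $\bm{z}$ vanishes.

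It then remains to check Condition \ref{cond:1a} for the pair $(\bm{z},\bm{z}')$. Since $\bm{z}$ and the loop $\bm{z}'$ both alternate in sign around their cycles, on the shared arc their signs are either identical or exactly opposite, so one of $\pm\bm{z}'$ can be oriented so that the chord cell is its distinguished index $i_{2m}$. Selecting alternative i) or ii) of Condition \ref{cond:1a} according to whether this orientation opposes or agrees with $\bm{z}$ on the $2m-1$ shared cells then makes the required strict inequalities between $\bm{z}^+(i_j)$ and $\bm{z}^-(i_j)$ hold at those cells, while at the chord cell $\bm{z}^+=\bm{z}^-=0$ supplies the required weak inequality. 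Thus $\mathcal B_{\mathrm{df}1}(S)$ satisfies Condition \ref{cond:1a}, and Proposition \ref{prop:gen2} shows it is distance reducing, hence connects every fiber $\tilde{\mathcal F}_\Bt$ of zero-one tables. Equivalently, one may run the removal procedure following Proposition \ref{prop:gen2}, repeatedly deleting each non-df-$1$ primitive loop $\bm{z}=\bm{z}'+\tilde\bm{z}$ that splits into a df $1$ loop and a shorter loop with a single sign cancellation, until only $\mathcal B_{\mathrm{df}1}(S)$ remains.

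I expect the main obstacle to be the structural lemma together with its sign bookkeeping: proving that a non-df-$1$ primitive loop always contains a df $1$ loop meeting its support in all but one chord cell, and that the orientation of this sub-loop can be chosen to place the chord in the distinguished position while realizing the crossing pattern of Condition \ref{cond:1a}. The chord-splitting and the induction on degree down to a df $1$ loop are the genuinely combinatorial steps imported from \cite{aoki-takemura-2005jscs}; the only new point needed for the zero-one setting is the automatic observation that a vanishing chord cell furnishes exactly the weak inequality $\bm{z}^+=\bm{z}^-$ that Condition \ref{cond:1a} requires on the distinguished cell.
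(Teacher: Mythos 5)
Your argument is correct, but it is organized differently from what the paper does: the paper gives no self-contained proof at all and simply defers to the proof of Theorem 1 in \cite{aoki-takemura-2005jscs}, i.e.\ a direct distance-reduction argument on an arbitrary pair $\Bx,\By\in\tilde{\cF}_{\Bt}$, rechecked so that intermediate tables remain zero-one. You instead route everything through the machinery of Section \ref{sec:main}: you identify $\cB_0$ for the quasi-independence configuration as the set of primitive loops on $S$ (your observation that conformal summands of an $S$-supported move are automatically $S$-supported is exactly the right justification), and then verify Condition \ref{cond:1a} for $\cB={\mathcal B}_{\mathrm{df}1}(S)$ so that Proposition \ref{prop:gen2} finishes the job. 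What this buys is a cleaner reduction---via Proposition \ref{prop:graver} one only ever has to treat a single primitive loop $\Bz$ rather than an arbitrary difference of fiber elements---at the price of the structural lemma you yourself flag as the main obstacle: every non--df~$1$ loop on $S$ contains a df~$1$ loop meeting $\supp(\Bz)$ in all but exactly one cell. That lemma does hold, and the two details worth making explicit are these. First, in the iterated chord-splitting one always keeps the piece avoiding the previously introduced chord, and any chord of that piece necessarily lies off $\supp(\Bz)$, because a chord lying on $\supp(\Bz)$ would close up a strictly shorter loop whose support is contained in the single cycle $\supp(\Bz)$, which is impossible; this is what maintains the invariant ``exactly one off-support cell.'' Second, the $2m-1$ shared cells form a path all of whose edges belong to the cycle $\supp(\Bz)$, hence a contiguous arc of that cycle; this is precisely what legitimizes your global sign-consistency claim (the two loops agree everywhere or disagree everywhere on the shared cells), after which the parity of the arc determines whether alternative i) or ii) of Condition \ref{cond:1a} applies with the vanishing chord in the distinguished position $i_{2m}$. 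With those details supplied, your proof is complete and arguably more self-contained than the paper's citation.
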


The proof is in the same way as the proof of Theorem 1 in 
\cite{aoki-takemura-2005jscs} and omitted here.

As discussed in Section 5 in \cite{aoki-takemura-2005jscs}, 
in the case of square tables with diagonal elements being structural
zeros, 
${\mathcal B}_{\mathrm{df}1}(S)$ contains basic moves and df 1 loops
of degree 3 which coincides with the results of
\cite{rao-etal-1996sankhya}.

\subsection{Latin squares and zero-one tables for no-three-factor-interaction models}
\label{sec:latin-squares}

Zero-one tables also appear quite often in the form of incidence matrices
for combinatorial problems.  Here as an example 
we consider Latin squares.
A Latin square is an $n \times n$ table filled with $n$ different symbols in
such a way that each symbol occurs exactly once in each row and column. 
A $3 \times 3$ Latin square is written by 
\begin{equation}
 \label{3*3-LS}
\begin{array}{|ccc|}\hline
 1 & 2 & 3\\
 2 & 3 & 1\\
 3 & 1 & 2\\ \hline
\end{array} .
\end{equation}
When the symbols of an $n\times n$  Latin square are considered as
coordinates of the third axis (sometimes called the orthogonal array
representation of a Latin square), it is a particular element of a fiber
for the $n\times n\times n$ no-three-factor-interaction model with all
two-dimensional marginals (line sums) equal to 1.
For example, the $3 \times 3$ Latin square (\ref{3*3-LS}) is considered as 
a $3 \times 3 \times 3$ zero-one table $\bm{x}=\{x_{i_1 i_2 i_3}\}$
\begin{equation}
 \label{3*3-LT-2}
\bm{x} =
\begin{array}{ccc}
\begin{array}{c|ccc|}
 \multicolumn{1}{c}{} &\multicolumn{1}{c}{} & i_2 & \multicolumn{1}{c}{} 
  \\  \cline{2-4}
& 1 & 0 & 0\\
i_1 & 0 & 0 & 1\\
& 0 & 1 & 0\\ \cline{2-4}
\end{array}, & 
\begin{array}{c|ccc|}
 \multicolumn{1}{c}{} &\multicolumn{1}{c}{} & i_2 & \multicolumn{1}{c}{} 
  \\  \cline{2-4}
& 0 & 1 & 0\\
i_1 & 1 & 0 & 0\\
& 0 & 0 & 1\\ \cline{2-4}
\end{array}, & 
\begin{array}{c|ccc|}
 \multicolumn{1}{c}{} &\multicolumn{1}{c}{} & i_2 & \multicolumn{1}{c}{} 
  \\  \cline{2-4}
& 0 & 0 & 1\\
i_1 & 0 & 1 & 0\\
& 1 & 0 & 0\\ \cline{2-4}
\end{array}\\
\qquad i_3=1 & \qquad i_3=2 & \qquad i_3=3 
\end{array}
\end{equation}
with $x_{i_1 i_2 +} = 1$, $x_{+ i_2 i_3} = 1$, $x_{i_1 + i_3} = 1$ 
for all $i_1$, $i_2$ and $i_3$.
One of the reasons to consider a Markov basis for Latin squares is to
generate a Latin square randomly.  \cite{fisher-yates} advocated to
choose a Latin square randomly from the set of Latin squares.  
\cite{jacobson-matthews} gave a Markov basis for 
the set of $n\times n$ Latin squares.

Because the set of Latin squares is just a particular fiber, it may be
the case that a minimal set of moves connecting all Latin squares is
smaller to the set of moves connecting all zero-one tables.  This is
indeed the case as we show for the simple case of $n=3$.
We first present a connectivity result for $3\times 3\times 3$ zero-one tables with
all line sums fixed. 

Let $\Bz=\{z_{ijk}\}_{i,j,k=1,2,3}$ be a move for  $3\times 3\times 3$ 
no-thee-factor-interaction model.  From \cite{diaconis-sturmfels} and
\cite{aoki-takemura-2003anz}
the minimal Markov basis consists of basic moves such as
\begin{equation}
\label{eq:333basic}
 \bm{z} =
 \begin{array}{|ccc|}\hline
  1 & -1 & 0\\
  -1 & 1 & 0\\
  0 & 0 & 0\\ \hline
 \end{array} \quad 
 \begin{array}{|ccc|}\hline 
  -1 & 1 & 0\\
  1 & -1 & 0\\
  0 & 0 & 0\\ \hline
 \end{array} \quad 
 \begin{array}{|ccc|}\hline 
  0 & 0 & 0\\
  0 & 0 & 0\\
  0 & 0 & 0\\ \hline
 \end{array}
\end{equation}
and degree 6 moves such as
\begin{equation}
\label{eq:333deg6}
 \bm{z} =
 \begin{array}{|ccc|}\hline
  1 & -1 & 0\\
  0 & 1 & -1\\
  -1& 0 & 1\\ \hline
 \end{array} \quad 
 \begin{array}{|ccc|}\hline 
  -1 & 1 & 0\\
  0 & -1 & 1\\
  1 & 0 & -1\\ \hline
 \end{array} \quad 
 \begin{array}{|ccc|}\hline 
  0 & 0 & 0\\
  0 & 0 & 0\\
  0 & 0 & 0\\ \hline
 \end{array} \ .
\end{equation}
However these moves do not connect zero-one tables of the
$3\times 3\times3$ no-three-factor-interaction model.    We need the
following type of degree 9 move, which corresponds to the difference of two
Latin squares.
\begin{equation}
\label{eq:333deg9}
 \bm{z} =
 \begin{array}{|ccc|}\hline
  1 & -1 & 0\\
  0 & 1 & -1\\
  -1& 0 & 1\\ \hline
 \end{array} \quad 
 \begin{array}{|ccc|}\hline 
  0 & 1 & -1\\
  -1 & 0 & 1\\
  1 & -1 & 0\\ \hline
 \end{array} \quad 
 \begin{array}{|ccc|}\hline 
  -1 & 0 & 1\\
  1 & -1 & 0\\
  0 & 1 & -1\\ \hline
 \end{array} \ .
\end{equation}

\begin{proposition}
\label{prop:333}
The set of basic moves (\ref{eq:333basic}), degree 6 moves
(\ref{eq:333deg6}) and degree 9 moves (\ref{eq:333deg9}) forms a Markov
basis for $3\times 3\times 3$ zero-one tables for the no-three-factor-interaction
model.
\end{proposition}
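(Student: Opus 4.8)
The plan is to reduce the statement to Proposition \ref{prop:graver} and then to carry out a finite verification. First I would observe that for any two distinct zero-one tables $\Bx,\By$ in the same fiber the difference $\Bz=\Bx-\By$ has all entries in $\{-1,0,1\}$, so it is automatically square-free. Since $\Bz$ is a move for the no-three-factor-interaction model, each line sum $z_{ij+}$, $z_{i+k}$, $z_{+jk}$ vanishes, and because a length-three line with entries in $\{-1,0,1\}$ summing to zero must be either identically zero or a permutation of $(1,-1,0)$, each line of $\Bz$ contains at most one $+1$ and one $-1$. In particular $\Bz^+$ and $\Bz^-$ are zero-one tables with the same two-way marginals and disjoint supports, with $\supp(\Bz^+)\subseteq\supp(\Bx)$ and $\supp(\Bz^-)\subseteq\supp(\By)$.

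By Proposition \ref{prop:graver}, the set $\cB_0$ of all square-free elements of the Graver basis is strongly distance reducing for zero-one tables, hence already connects $\Bx$ and $\By$. The task thus becomes to identify $\cB_0$ explicitly for the $3\times3\times3$ no-three-factor-interaction model and to relate it to the three listed families. I would compute $\cB_0$ with 4ti2, exactly as for Table \ref{tab:graver}, and then classify its elements up to the symmetry group permuting the three levels within each axis and permuting the three axes. I expect the square-free primitive moves to fall into precisely three orbits: the degree-four basic moves (\ref{eq:333basic}), the degree-six moves (\ref{eq:333deg6}), and the degree-nine Latin-square-difference moves (\ref{eq:333deg9}). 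If these three families exhaust $\cB_0$, then $\cB=\cB_0$ and Proposition \ref{prop:graver} immediately gives the claim.

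Should the computation reveal square-free primitive moves outside the three listed families, I would instead invoke Proposition \ref{prop:gen2}: for each such $\Bz\in\cB_0\setminus\cB$ I would exhibit a move $\Bz'\in\cB$ whose positive and negative parts nest inside those of $\Bz$ according to the generalized crossing pattern of Condition \ref{cond:1a}, so that applying $\Bz'$ strictly reduces $|\Bz|$ while keeping all entries in $\{0,1\}$. Because $\Bz$ ranges over a finite, explicitly computed set, this is a finite check, and the conformal decomposition of $\Bx-\By$ into square-free primitives (as in the proof of Proposition \ref{prop:graver}) then lets the reducing moves be applied to $\Bx$ one at a time without ever creating an entry equal to $2$ or $-1$.

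The main obstacle is the behavior on the most constrained fibers, namely those in which all two-way line sums equal one; these are exactly the fibers of Latin squares, where $\Bz$ has full support. On such a fiber no basic or degree-six move can be added to a zero-one table without producing a $2$ or a $-1$, so the degree-nine moves are genuinely indispensable. The heart of the proof is therefore to confirm that the degree-nine moves connect this fiber, which reduces to checking that the graph on the twelve order-three Latin squares, with an edge whenever two squares differ by a move in $\cB$, is connected. This last point is a small finite verification, and together with the classification of $\cB_0$ it completes the argument.
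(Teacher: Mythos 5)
Your overall plan---reduce to the square-free part $\cB_0$ of the Graver basis via Proposition~\ref{prop:graver}, identify $\cB_0$ explicitly, and trim it to the three listed families via Condition~\ref{cond:1a} and Proposition~\ref{prop:gen2}---is a legitimate route and genuinely different from the paper's proof, which directly analyzes the slices of the difference $\Bx-\By$ of two fiber-mates and shows by hand that a listed move always reduces the distance. But your plan has a gap exactly where you expect it to close: the square-free primitive moves do \emph{not} fall into the three listed orbits. For instance,
\[
 \bm{z} =
 \begin{array}{|ccc|}\hline
  1 & -1 & 0\\
  -1 & 0 & 1\\
  0 & 1 & -1\\ \hline
 \end{array} \quad
 \begin{array}{|ccc|}\hline
  -1 & 1 & 0\\
  1 & -1 & 0\\
  0 & 0 & 0\\ \hline
 \end{array} \quad
 \begin{array}{|ccc|}\hline
  0 & 0 & 0\\
  0 & 1 & -1\\
  0 & -1 & 1\\ \hline
 \end{array}
\]
is a square-free move of degree $7$ (all line sums vanish; its $k$-slices are one degree-three loop and two degree-two loops). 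Every nonzero line of $\bm{z}$ carries exactly one $+1$ and one $-1$, so any conformal summand that contains one cell of a line must contain the opposite-signed cell of that line; this closure, started from any support cell, generates all $14$ support cells, so $\bm{z}$ is primitive and lies in $\cB_0$ while being none of (\ref{eq:333basic}), (\ref{eq:333deg6}), (\ref{eq:333deg9}). Hence your main line fails and the whole burden shifts to the fallback finite verification of Condition~\ref{cond:1a}, which you describe but do not carry out. (For this particular $\bm{z}$ the check does succeed: the basic move supported on $i,j,k\in\{1,2\}$ realizes pattern (ii) of Condition~\ref{cond:1a} with slack cell $(2,2,1)$; but the check must be done for every orbit of $\cB_0\setminus\cB$, including degree-$8$ elements, and is the actual content of the proof.)

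Your closing analysis of the ``hardest'' fiber is also incorrect. Degree-6 moves \emph{can} be added to order-$3$ Latin squares: applying (\ref{eq:333deg6}) to the Latin square whose symbol-$1$ and symbol-$2$ positions are $\{(1,2),(2,3),(3,1)\}$ and $\{(1,1),(2,2),(3,3)\}$ simply transposes the two symbols and yields another Latin square; the paper notes immediately after Proposition~\ref{prop:333} that the twelve $3\times3$ Latin squares are connected by degree-6 moves alone. The fiber that genuinely forces degree-9 moves is a different one: the paper exhibits a table with $13$ ones and some line sums equal to $2$ to which no basic or degree-6 move is applicable. So the ``small finite verification'' you place at the heart of the argument both rests on a false premise (that only degree-9 moves act on the Latin-square fiber) and would in any case be redundant, since connectivity of every fiber already follows once the Condition~\ref{cond:1a} check over $\cB_0\setminus\cB$ is actually completed.
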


\begin{proof}
Consider any line sum, such as $0=z_{+11}=z_{111}+z_{211}+z_{311}$ 
of a move $\Bz$.  If $(z_{111},z_{211},z_{311}) \neq (0,0,0)$, then
we easily see that $\{z_{111},z_{211},z_{311}\} = \{ -1,0,1\}$.
By a similar consideration as in  \cite{aoki-takemura-2003anz},
each $i$- or $j$- or $k$-slice is either a loop of degree two or
loop of degree three, such as
\begin{equation}
\label{eq:333slice}
 \begin{array}{|ccc|}\hline
  1 & -1 & 0\\
  -1 & 1 & 0\\
  0 & 0 & 0\\ \hline
 \end{array} \quad  \text{or} \quad
 \begin{array}{|ccc|}\hline
  1 & -1 & 0\\
  0 & 1 & -1\\
  -1 & 0 & 1\\ \hline
\end{array}\  .
\end{equation}
Now we consider two cases: 1) there exists a slice with a loop of
degree two, or 2) all slices are loops of degree three.\\
{\bf Case 1.} 
Without loss of generality, we can assume that the $(i=1)$-slice of
$\Bz$ is the loop of degree two in (\ref{eq:333slice}).  Then we can
further assume that $z_{211}=-1$ and $z_{311}=0$.  Now suppose
that $z_{222}=-1$. 
If $z_{212}=1$ or $z_{221}=1$, then
this constitutes a strong crossing pattern of Condition \ref{cond:1}
and we can reduce $|\Bz|$ by a basic move.  This implies 
$z_{212}=z_{221}=0$. But then $z_{213}=z_{223}=1$ and
this contradicts the pattern of $\{z_{213},z_{223},
z_{233}\}=\{-1,0,1\}$. 

By the above consideration we have $z_{222}=0$ and $z_{322}=-1$.
By a similar  consideration for the cells $z_{i12}$ and $z_{i21}$,
$i=1,2,3$, we easily see that $\Bz$ is of the form
\[
 \begin{array}{|ccc|}\hline
  1 & -1 & 0\\
  -1 & 1 & 0\\
  0 & 0 & 0\\ \hline
 \end{array} \quad  
 \begin{array}{|ccc|}\hline
  -1 & 1 & 0\\
  0 & 0 & 0\\
  1 & -1 & 0\\ \hline
\end{array} \quad
\begin{array}{|ccc|}\hline
  0 & 0 & 0\\
  1 & -1 & 0\\
  -1 & 1 & 0\\ \hline
 \end{array}, 
\]
which is a degree 6 move.\\
{\bf Case 2.}   It is easily seen that the only case where degree 6
moves can not be applied is of the form of the move of degree 9 in
(\ref{eq:333deg9}).  This proves that connectivity is guaranteed if we
add degree 9 moves.  

We also want to show that degree 9 moves are  needed 
for connectivity.  Consider
\[
\Bx = 
 \begin{array}{|ccc|}\hline
  1 & 0 & 1\\
  0 & 1 & 0\\
  0& 0 & 1\\ \hline
 \end{array} \quad 
 \begin{array}{|ccc|}\hline 
  0 & 1 & 0\\
  0 & 1 & 1\\
  1 & 0& 0\\ \hline
 \end{array} \quad 
 \begin{array}{|ccc|}\hline 
  0 & 0 & 1\\
  1 & 0 & 0\\
  1 & 1 & 0\\ \hline
 \end{array} \ .
\]
By a simple program it is easily checked that if we apply any basic move
or any move of degree 6 to $\Bx$, $-1$ or $2$ has to appear. Hence degree
9 moves are required to connect zero-one tables.

\end{proof}

Now consider $3\times 3$ Latin squares (\ref{3*3-LT-2}).  It is
well-known that there is only one isotopy class of $3\times 3$ Latin
squares (Chapter III of \cite{colbourn-dinitz-2nd}), i.e., all 
$3 \times 3$ Latin squares are connected by the action of the direct
product $S_3 \times S_3 \times S_3$ of the symmetric group $S_3$ 
which 
is generated by transpositions, and a transposition corresponds
to a move of degree 6 in (\ref{eq:333deg6}). Therefore, 
{\em the set of $3\times 3$ Latin squares in the
orthogonal array representation is connected by the set of 
moves of degree 6 in (\ref{eq:333deg6})}.   In view of Proposition
\ref{prop:333}, we see that we do not need basic moves nor degree 9 moves
for connecting $3\times 3$ Latin squares.

There are two isotopy classes for $4\times 4$ Latin squares (1.18 of
III.1.3 of \cite{colbourn-dinitz-2nd}) and representative elements of
these two classes are connected by a basic move.  Transposition of
two levels for a factor corresponds to a degree 8 move of the following form.
{\small
\[
 \bm{z} =
 \begin{array}{|cccc|}\hline
  1 & -1 & 0 & 0\\
  0 & 1 & -1 & 0\\
  0& 0 & 1 & -1\\ 
  -1 & 0 & 0 & 1\\ \hline
 \end{array} \quad 
 \begin{array}{|cccc|}\hline
  -1 & 1 & 0 & 0\\
  0 & -1 & 1 & 0\\
  0& 0 & -1 & 1\\ 
  1 & 0 & 0 & -1\\ \hline
 \end{array} \quad 
 \begin{array}{|cccc|}\hline 
  0 & 0 & 0 & 0\\
  0 & 0 & 0 & 0\\
  0 & 0 & 0 & 0\\
  0 & 0 & 0 & 0\\ \hline
 \end{array} 
\quad 
\begin{array}{|cccc|}\hline 
  0 & 0 & 0 & 0\\
  0 & 0 & 0 & 0\\
  0 & 0 & 0 & 0\\
  0 & 0 & 0 & 0\\ \hline
 \end{array} \ .
\]
}
Therefore the set of $4\times 4$ Latin squares is connected by
the set of basic moves and moves of degree 8 of the above form.
We can apply a similar consideration to the celebrated result of 
22 isotopy classes of $6\times 6$ Latin squares 
derived by \cite{fisher-yates}.

\section{Concluding remarks}
\label{sec:remarks}

In this paper we discussed Markov bases for tables with zero-one
entries.  We derived several general results, where a particular subset
of the Graver basis connects zero-one tables.  However, in general, we
found that a Markov basis for zero-one tables is difficult and requires 
separate arguments for each model.  We obtained 
Markov bases for zero-one tables for some common models of
contingency tables.

\cite{rapallo-yoshida} gave some results for contingency tables with
bounded entries, in particular for the case of 
two-way tables with structural zeros.
A zero-one table is a particular case of contingency
tables with bounded entries.    If the bound is large
enough, compared to the sample size of a particular fiber, it seems
that the bound is not binding.
In this sense the bound of 1 in our case seems to be most stringent.
On the other hand, our proof of Proposition \ref{prop:333} suggests that
a Markov basis for zero-one tables may have a simple structure. 
It is an interesting problem to describe how Markov bases behave 
as we vary the upper bound for the cells.

In Section \ref{sec:latin-squares} we considered Latin squares.  It is
of interest to consider other combinatorial designs, such as the Sudoku.
A Markov basis for Sudoku designs is considered in
\cite{fontana-rogantin-indicator}
and their invariance structure is discussed in \cite{perturbation}.
It is a challenging problem to derive a Markov basis for the ordinary
$3\times 3\times 3\times 3$ Sudoku.

\section*{Acknowledgment}
The authors would like to thank the editors and a anonymous referee for
giving us constructive comments to improve this paper.

\bibliographystyle{plainnat}
\bibliography{Hara-Takemura-arxiv0908.4461}

\end{document}